\numberwithin{equation}{section}
\theoremstyle{definition}
\newtheorem{theorem}{Theorem}[section]
\newtheorem{proposition}[theorem]{Proposition}
\newtheorem{lemma}[theorem]{Lemma}
\newtheorem{corollary}[theorem]{Corollary}
\newtheorem{definition}[theorem]{Definition}
\newtheorem{example}[theorem]{Example}
\def\Gwa{\mathbf{Gr}^{\bullet}}
\def\rGwa{\mathbf{rGr}^{\bullet}}
\def\asc{{\ast}^{\circ}}
\title{\LARGE \textbf{Actions and semi-direct products in categories of groups with action}}
\author[a]{Tamar Datuashvili\thanks{\textbf{Corresponding author: }\texttt{tamar.datu@gmail.com} (T. Datuashvili)}}
\author[b]{Tunçar Şahan}
\affil[a]{\small{A. Razmadze Mathematical Institute of I. Javakhishvili Tbilisi State University, 6 Tamarashvii Str., Tbilisi 0177, Georgia}}
\affil[b]{\small{Department of Mathematics, Aksaray University, Aksaray, Turkey}}
\date{}
\begin{document}
\maketitle

\begin{abstract}
Derived actions in the category of groups with action on itself $\mathbf{Gr}^{\bullet}$ are defined and described. This category plays a crucial role in the solution of Loday's two problems stated in the literature. A full subcategory of reduced groups with action $\mathbf{rGr}^{\bullet}$ of $\mathbf{Gr}^{\bullet}$ is introduced, which is not a category of interest but has some properties, which can be applied in the investigation of action representability in this category; these properties are similar to those, which were used in the construction of universal strict general actors in the category of interest. Semi-direct product constructions are given in $\mathbf{Gr}^{\bullet}$ and $\mathbf{rGr}^{\bullet}$ and it is proved that an action is a derived action in $\mathbf{Gr}^{\bullet}$ (resp. $\mathbf{rGr}^{\bullet}$) if and only if the corresponding semi-direct product is and object of $\mathbf{Gr}^{\bullet}$ (resp. $\mathbf{rGr}^{\bullet}$). The results obtained in this paper will be applied in the forthcoming paper on the representability of actions in the category $\mathbf{rGr}^{\bullet}$.\\[0.4cm]
{\small \textbf{Mathematics Subject Classification (2020):} 08A99, 08C05, 22F05.}\\[0.1cm]
{\small \textbf{Keywords:} Group with action, semi-direct product, action, extension}
\end{abstract}

\section{Introduction}

We develop action theory in the category of groups with action on itself $\Gwa$, introduced in \cite{Datuashvili2002a,Datuashvili2004,Datuashvili2017}, where it played a main role in the solution of Loday's two problems stated in \cite{Loday1993,Loday2003}. This category is neither a category of interest in the sense of Orzech \cite{Orzech1972,Orzech1972a}, nor a modified category of interest \cite{Boyaci2015}. It is a category of groups with operations, but doesn't satisfy all conditions stated in \cite{Porter1987}. The category $\Gwa$ is a category of $\Omega$-groups in the sense of Kurosh \cite{Kurosh1965}. Actions are defined in $\Gwa$ as derived actions from split extensions in this category as it is in the category of interest or in any semi-abelian category \cite{Borceux2005}. We describe derived action conditions in this category and construct a semi-direct product $B\ltimes A$, where $A,B\in\Gwa$ and $B$ has a derived action on $A$. We prove that an action of $B$ on $A$ is a derived action if and only if $B\ltimes A\in\Gwa$ (Theorem \ref{theo:der-act-equiv}). Then we define a full subcategory $\rGwa $ in $\Gwa$ and describe derived actions in $\rGwa$. Our interest is to investigate the existence of a universal acting object on an object $G\in\Gwa$ applying the results obtained in \cite{Casas2007,Casas2010} for categories of interest. Since the category $\Gwa$ is far from being category of interest we found its  subcategory $\rGwa$, which is not a category of interest, but has interesting properties which are close to those ones we use in the construction of a universal strict general actor for any object of a category of interest in \cite{Casas2007,Casas2010}. We prove necessary and sufficient condition for the action of $B$ on $A$, $A, B\in\rGwa$, to be a derived action in terms of the semi-direct product $B\ltimes A$, like we have in $\Gwa$ (Theorem \ref{theo:equivcond}). Applying the results of this paper, we will prove that under certain conditions on the object $G\in\rGwa$, it has representable actions in the sense of \cite{Borceux2005}, i.e. a universal acting object, which represents all actions on $A$.

\section{Preliminary definitions and results}

Let $G$ be a group which acts on itself from the right side, i.e. we have a map $\varepsilon\colon G\times G\rightarrow G$ with
\begin{alignat*}{2}
\varepsilon(g, g'+g'') & = \varepsilon(\varepsilon(g, g'), g'') \\
\varepsilon(g, 0) & = g \\
\varepsilon(g'+g'', g) & = \varepsilon(g'+g) + \varepsilon(g''+g)
\end{alignat*}
for $g, g', g''\in G$. Denote $\varepsilon(g, h) =g^{h}$, for $g, h\in G$.

We denote the group operation additively, nevertheless the group is not commutative in general. From the third condition on $\varepsilon$ it follows that
\begin{equation*}
	0^h = 0, \text{ for any } h\in G.
\end{equation*}

If $(G', \varepsilon')$ is another group with action then a homomorphism $(G, \varepsilon)\rightarrow(G', \varepsilon')$ is a group homomorphism $\varphi\colon G\rightarrow G'$, for which the diagram
\[
\xymatrix{
	G\times G \ar[r]^-{\varepsilon} \ar[d]_{\varphi\times\varphi} & G \ar[d]^{\varphi} \\
	G'\times G' \ar[r]_-{\varepsilon'} & G'}
\]
commutes. In other words, we have
\begin{equation}\label{cond:gwamorph}
	\varphi\left(g^h\right) = {\varphi(g)}^{\varphi(h)}
\end{equation}
for all $g, h\in G$.

Note that action defined above is a split derived action in the sense of \cite{Orzech1972, Orzech1972a}.

According to Kurosh \cite{Kurosh1965} an $\Omega$-group is a group with a system of $n$-ary algebraic operations $\Omega_{n\geq 0}$, which satisfy the condition
\begin{equation}\label{cond:omgrp}
000\cdots 0\omega = 0,
\end{equation}
where 0 is the identity element of $G$, and 0 on the left side occurs $n$ times if $\omega$ is an $n$-ary operation. In special cases $\Omega$-groups give groups, rings, associative and non-associative algebras like Lie and Leibniz algebras etc. and groups with action on itself as well. In the latter case $\Omega$ consists of one binary operation which is an action or $\Omega$ consists of only unary operations, which are elements of $G$, and this operation is an action again. In both cases condition \eqref{cond:omgrp} is satisfied. Denote the category of groups with action on itself by $\Gwa$; here the action is considered as a binary operation and morphisms between the objects in $\Gwa$ are group homomorphisms satisfying condition \eqref{cond:gwamorph}.

Let $G\in\Gwa$.

\begin{definition}\cite{Datuashvili2002a}
A non-empty subset $A$ of $G$ is called an ideal of $G$ if it satisfies the following conditions
\begin{enumerate}[label={\textbf{(\arabic{*})}}, leftmargin=1cm]
	\item\label{cond:defideal1} $A$ is a normal subgroup of $G$ as a group;
	\item\label{cond:defideal2} $a^{g}\in A$, for any $a\in A$ and $g\in G$;
	\item\label{cond:defideal3} $-g+g^a\in A$, for any $a\in A$ and $g\in G$.
\end{enumerate}
\end{definition}

Note that the condition \ref{cond:defideal3} in this definition is equivalent to the condition, that $g^a-g\in A$, since $(-g)^a=-g^a,$ for any $a\in A$ and $g\in G$. This definition is equivalent to the definition of an ideal given in \cite{Kurosh1965} for $\Omega$-groups in the case where $\Omega$ consists of one binary operation of action, one can see the proof in \cite{Datuashvili2002a}.

\section{Actions and semi-direct products in $\Gwa$}

Let $A, B\in\Gwa$. An action of $B$ on $A$ by definition is a triple of mappings $\beta=(\beta_{+}, \beta_{\ast}, \beta_{\asc})\colon B\times A\rightarrow A$, where $\ast$ is a binary operation of action, $\asc$ is its dual operation in $\Gwa$, i.e. $\beta_{+}(b,a)= b\cdot a$, $\beta_{\ast}(b,a)=a\ast b= {a}^{b}$ and $\beta_{\asc}(b,a)=a\asc b={b}^{a}$.

In the category of interest or category of groups with operations there is a condition $0\ast g=g\ast0=0$, for any binary operation $\ast\in\Omega\backslash\{+\}$, any object $G$ in this category and any element $g\in G$. In the category $\Gwa$ we have ${0}^{g}=0$, for any $G\in\Gwa$ and any $g\in G$, but ${g}^{0}\neq 0$ in general. Therefore we modify the definition of derived action due to split extensions \cite{Orzech1972,Orzech1972a,Porter1987}, known for the category of groups with operations or category of interest, for the category $\Gwa$. Note that the definition of derived action from the split extension agrees with the definition of action in a semi-abelian category \cite{Borceux2005}.

Let $A,B\in\Gwa$. An extension of $B$ by $A$ is a sequence
\begin{equation}\label{ses}
\xymatrix{0 \ar[r] &   A \ar@{->}[r]^-{i} &   E \ar@{->}[r]^-{p} &   B \ar[r] & 0}
\end{equation}
in which $p$ is surjective and $i$ is the kernel of $p$. We say that an extension is split if there is a morphism $j\colon B\rightarrow E$, such that $pj=1_B$. We will identify $i(a)$ with $a$.

A split extension induces a triple of actions of $B$ on $A$ corresponding to the operation of addition, action and its dual operation in $\Gwa$. From the split extension \eqref{ses} for any $b\in B$ and $a\in A$ we define
\begin{alignat}{2}
b\cdot a &= j(b)+a-j(b) \label{32}\\
{b}^{a} &={j(b)}^{a}-j(b) \label{33}\\
{a}^{b} &={a}^{j(b)} \label{34}
\end{alignat}

Actions defined by \eqref{32}-\eqref{34} will be called derived actions of $B$ on $A$ as it is in the case of groups with operations or category of interest. Note that \eqref{33} differs from what we have in the noted known cases, since as we have mentioned above ${b}^{0}\neq 0$ in $B$.

\begin{proposition}\label{prop:actcond}
Let $A,B,\in\Gwa$. Derived actions of $B$ on $A$ satisfy the following conditions:
\begin{enumerate}[label={\textbf{(\alph{*})}}, leftmargin=1cm]
\item\label{prop:act-a} well-known group action conditions for the dot left action:
\begin{alignat*}{2}
	b\cdot(a_1+a_2) &= b\cdot a_1 + b\cdot a_2 \\
	(b_1+b_2)\cdot a &= b_1\cdot(b_2\cdot a) \\
	0\cdot a &=a
\end{alignat*}
where $a,a_1,a_2\in A$ and $b,b_1,b_2\in B$,
\item\label{prop:act-b} ${0_{A}}^{b}=0_A$, ${0_B}^{a} = 0_A$, ${b}^{0_{A}}=0_A$, ${a}^{0_{B}}=a$ where $0_A$ and $0_B$ denote the zero elements of $A$ and $B$ respectively. For any $a,a'\in A$ and $b,b'\in B$,
\begin{enumerate}[leftmargin=1.5cm]
	\item[$\bm{(1_{A})}$]\label{prop:act1a} ${(a+a')}^{b}={a}^{b}+{(a')}^{b}$,
	\item[$\bm{(2_{A})}$]\label{prop:act2a} ${(b+b')}^{a}={b}^{a}+b\cdot\left((b')^{a}\right)$,
	\item[$\bm{(3_{A})}$]\label{prop:act3a} ${(b\cdot a)}^{a'}+{b}^{a'}={b}^{a'}+b\cdot\left( {a}^{a'} \right)$,
	\item[$\bm{(4_{A})}$]\label{prop:act4a} $\left(b\cdot a\right)^{b'}={b}^{b'}\cdot {a}^{b'}$,
	\item[$\bm{(1_{B})}$]\label{prop:act1b} ${b}^{(a+a')}=\left({b}^{a}\right)^{a'}+{b}^{a'}$,
	\item[$\bm{(2_{B})}$]\label{prop:act2b} ${a}^{b+b'}=\left({a}^{b}\right)^{b'}$
	\item[$\bm{(3_{B})}$]\label{prop:act3b} $\left({a}^{(b\cdot a')}\right)^{b}=\left({a}^{b}\right)^{a'}$,
	\item[$\bm{(4_{B})}$]\label{prop:act4b} $\left({b}^{(b'\cdot a)}\right)^{b'}=\left({b}^{b'}\right)^{a}$.
\end{enumerate}
\end{enumerate}
\end{proposition}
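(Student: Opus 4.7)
The plan is to work inside the split extension object $E\in\Gwa$, identifying $a\in A$ with $i(a)\in E$ and using $j(b)\in E$ as the canonical lift of $b\in B$. With this, the three derived actions \eqref{32}--\eqref{34} become explicit expressions in $E$, and every identity to check will reduce to a short computation in $E$ using the ambient $\Gwa$-axioms---left-distributivity $(g_1+g_2)^g = g_1^g + g_2^g$, right-associativity $g^{g_1+g_2} = (g^{g_1})^{g_2}$, and the base cases $0^g=0$ and $g^0=g$---together with the fact that $j$ is a group homomorphism and, where the internal actions of $A$ or $B$ appear, that $j$ is a morphism of $\Gwa$ (so $j(b^{b'}) = j(b)^{j(b')}$).

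Part (a) will be the familiar statement that conjugation by $j(b)$ defines a left action on $A$: normality of $A = \ker p$ keeps the image in $A$, and the three axioms come from conjugation respecting addition, from $j(b_1+b_2) = j(b_1) + j(b_2)$, and from $j(0_B) = 0$. The four base identities in (b) will follow immediately: $0_A^{\,b} = 0_A^{j(b)} = 0$ and $0_B^{\,a} = j(0_B)^a - j(0_B) = 0$ from $0^g = 0$; $b^{0_A} = j(b)^{0_A} - j(b) = j(b) - j(b) = 0$ and $a^{0_B} = a^{j(0_B)} = a^0 = a$ from $g^0 = g$.

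For the eight numbered identities my approach in each case will be the same: substitute \eqref{32}--\eqref{34} and simplify in $E$. Identity $(1_A)$ is direct from left-distributivity; $(1_B)$ and $(2_B)$ are direct from right-associativity. Identity $(2_A)$ will expand $j(b+b') = j(b) + j(b')$ and use $-(j(b)+j(b')) = -j(b')-j(b)$, the cancelling $-j(b)+j(b)$ being supplied by the dot action in $b \cdot (b')^a$. Identity $(3_A)$ will expand $(j(b) + a - j(b))^{a'}$ by left-distributivity and collapse $-j(b)^{a'} + j(b)^{a'}$; identity $(4_A)$ will do the same and then invoke $j(b^{b'}) = j(b)^{j(b')}$ to rewrite the two outer factors. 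Identities $(3_B)$ and $(4_B)$ will expand the inner dot actions $b\cdot a'$ and $b'\cdot a$, apply right-associativity to turn them into iterated exponentiation, and then collapse the flanking $\mp j(b)$ or $\mp j(b')$ factors via $g^{h+(-h)} = g^0 = g$.

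The one subtlety, and the main obstacle to watch, will be the non-commutativity of the group operation in $E$: several of these identities hold only because the inverse of a sum reverses order, as in $(2_A)$, and because conjugation factors must be distributed correctly across sums of three or more summands. Once this bookkeeping is done carefully, no identity will require more than a line or two of manipulation, and nothing beyond the axioms already established and the assumption that $j$ is a $\Gwa$-morphism will be used.
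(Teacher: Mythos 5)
Your proposal is correct and takes essentially the same approach as the paper's proof: substitute the definitions \eqref{32}--\eqref{34} of the derived actions, compute inside $E$ using left-distributivity $(g_1+g_2)^g=g_1^g+g_2^g$, the rule $g^{g_1+g_2}=(g^{g_1})^{g_2}$, the base cases $0^g=0$ and $g^0=g$, insertion of cancelling terms such as $-j(b)+j(b)$ or $-j(b)^{a'}+j(b)^{a'}$, and the fact that $j$ is a $\Gwa$-morphism for $\bm{(4_A)}$ and $\bm{(4_B)}$. The only minor imprecision is that $\bm{(1_B)}$ is not purely an instance of the rule $g^{g_1+g_2}=(g^{g_1})^{g_2}$; matching the right-hand side also requires distributing the exponent $a'$ over $j(b)^a-j(b)$, exactly the insertion-and-cancellation step your general scheme already provides.
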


Note that as it will be shown in the proof of Theorem \ref{theo:der-act-equiv}, all properties noted in \ref{prop:act-b} except ${a}^{0_{B}}=a$ follow from $\bm{(1_A)}$, $\bm{(2_{A})}$ and $\bm{(1_{B})}$. Nevertheless we preferred for explicitness to state in the theorem these properties separately.

\begin{proof}
\begin{enumerate}[label={\textbf{(\alph{*})}}, leftmargin=1cm]
\item This is obvious.
\item Follows from the action properties $({0_E}^{a}=0_E, {a}^{0_{E}}=a, \text{ for all } a\in A)$ and the definition of the derived action corresponding to the action operation and its dual \eqref{33} and \eqref{34}.
\begin{enumerate}[leftmargin=1.5cm]
	\item[$\bm{(1_{A})}$]\label{prf:act1a} Let $a,a'\in A$ and $b\in B$; then
		\begin{alignat*}{2}
		{(a+a')}^{b} &={(a+a')}^{j(b)}\\
		&={a}^{j(b)}+{(a')}^{j(b)}\\
		&={a}^{b}+{(a')}^{b}
		\end{alignat*}
	\item[$\bm{(2_{A})}$]\label{prf:act2a} Let $a\in A$ and $b,b'\in B$; then
		\begin{alignat*}{2}
		{(b+b')}^{a}&=\left(j(b+b')\right)^{a}-j\left(b+b'\right)\\
		&= j\left(b\right)^{a}+j\left(b'\right)^{a}-j\left(b'\right)-j\left(b\right)\\
		&= j\left(b\right)^{a}-j\left(b\right)+j\left(b\right)+j\left(b'\right)^{a}-j\left(b'\right)-j\left(b\right)\\
		&= {b}^{a}+b\cdot\left((b')^{a}\right)
		\end{alignat*}
	\item[$\bm{(3_{A})}$]\label{prf:act3a} Let $a,a'\in A$ and $b\in B$; then
		\begin{alignat*}{2}
		{(b\cdot a)}^{a'}+{b}^{a'}&= \left( j(b)+a-j(b)\right)^{a'}+{j(b)}^{a'}-j(b)\\
		&= j(b)^{a'}+a^{a'}-j(b)^{a'}+{j(b)}^{a'}-j(b)\\
		&= j(b)^{a'}-j(b)+{j(b)}+a^{a'}-j(b)\\
		&= {b}^{a'}+b\cdot\left( {a}^{a'} \right)
		\end{alignat*}
	\item[$\bm{(4_{A})}$]\label{prf:act4a} Let $a\in A$ and $b,b'\in B$; then
		\begin{alignat*}{2}
		\left(b\cdot a\right)^{b'}&= \left(j(b)+a-j(b)\right)^{b'} \\
		&= j(b)^{j(b')}+a^{j(b')}-j(b)^{j(b')}\\
		&= j\left(b^{b'}\right)+a^{j(b')}-j\left(b^{b'}\right)\\
		&= {b}^{b'}\cdot {a}^{b'}
		\end{alignat*}	
	\item[$\bm{(1_{B})}$]\label{prf:act1b} Let $a,a'\in A$ and $b\in B$; then
		\begin{alignat*}{2}
		{b}^{(a+a')}&= {j(b)}^{(a+a')}-j(b) \\
		&= {\left({j(b)}^{a}\right)}^{a'}-{j(b)}^{a'}+{j(b)}^{a'}-j(b) \\
		&= {\left({j(b)}^{a}-{j(b)}\right)}^{a'}+{j(b)}^{a'}-j(b) \\
		&= \left({b}^{a}\right)^{a'}+{b}^{a'}
		\end{alignat*}
	\item[$\bm{(2_{B})}$]\label{prf:act2b} Let $a\in A$ and $b,b'\in B$; then
		\begin{alignat*}{2}
		{a}^{b+b'}&= {a}^{j(b)+j(b')} \\
		&= \left({a}^{b}\right)^{b'}
		\end{alignat*}	
	\item[$\bm{(3_{B})}$]\label{prf:act3b} Let $a,a'\in A$ and $b\in B$; then
		\begin{alignat*}{2}
		\left({a}^{(b\cdot a')}\right)^{b}&= \left({a}^{(j(b)+a'-j(b))}\right)^{j(b)} \\
		&= {a}^{j(b)+a'} \\
		&= \left({a}^{j(b)}\right)^{a'} \\
		&= \left({a}^{b}\right)^{a'}
		\end{alignat*}	
	\item[$\bm{(4_{B})}$]\label{prf:act4b} Let $a,a'\in A$ and $b,b'\in B$; then
		\begin{alignat*}{2}
		\left({b}^{(b'\cdot a)}\right)^{b'}&= \left({j(b)}^{(j(b')+a-j(b'))}-j(b)\right)^{j(b')} \\
		&= \left(\left(\left({j(b)}^{j(b')}\right)^{a}\right)^{-j(b')}\right)^{j(b')}-{j(b)}^{j(b')} \\
		&= \left({j(b)}^{j(b')}\right)^{a}-{j(b)}^{j(b')} \\
		&= \left(j\left({b}^{b'}\right)\right)^{a}-j\left({b}^{b'}\right) \\
		&= \left({b}^{b'}\right)^{a}
		\end{alignat*}
\end{enumerate}
\end{enumerate}
\end{proof}

Given a triple of actions of $B$ on $A$ in $\Gwa$, we can define operations on the product $B\times A$ in the following way:
\begin{alignat}{2}
	(b,a)+(b',a')&=(b+b',a+b\cdot a') \\
	{(b,a)}^{(b',a')}&=({b}^{b'}, ({a}^{a'})^{b'}+ ({b}^{a'})^{b'})
\end{alignat}
for any $(b,a), (b',a')\in B\times A$. This kind of universal algebra will be callaed semi-direct product and denoted by $B\ltimes A$.

\begin{theorem}\label{theo:der-act-equiv}
Let $A,B\in\Gwa$, if $\beta=(\beta_{+}, \beta_{\ast}, \beta_{\asc})$ is a triple of actions of $B$ on $A$, then the following conditions are equivalent:
\begin{enumerate}[label={\textbf{(\arabic{*})}}, leftmargin=1cm]
	\item $\beta$ is a triple of derived actions of $B$ on $A$.
	\item $\beta_{+}$ satisfies group action conditions, $\beta$ satisfies conditions $\bm{(1_A)}-\bm{(4_A)}$, $\bm{(1_B)}-\bm{(4_B)}$ and the condition $a^{0_{B}}=a$, for any $a\in A$.
	\item The semi-direct product $B\ltimes A$ is an object in $\Gwa$.
\end{enumerate}
\end{theorem}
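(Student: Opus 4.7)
The plan is to establish the cyclic chain of implications $(1) \Rightarrow (2) \Rightarrow (3) \Rightarrow (1)$.

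The implication $(1) \Rightarrow (2)$ is essentially Proposition \ref{prop:actcond}, which already verifies $(1_A)$--$(4_A)$ and $(1_B)$--$(4_B)$ for any derived action. The only extra piece is the identity $a^{0_B}=a$, which follows from \eqref{34} together with $j(0_B)=0_E$ and the $\Gwa$-axiom $\varepsilon(g,0)=g$ in $E$.

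For $(2) \Rightarrow (3)$, which is the central technical step, I first check that $(B\ltimes A,+)$ is a group; this is the classical group-theoretic semi-direct product and uses only the group action axioms for $\beta_+$, with unit $(0_B,0_A)$ and inverses $-(b,a)=(-b,-b\cdot(-a))$. I then verify the three axioms of a group with action on itself for the operation $(b,a)^{(b',a')} = \bigl(b^{b'},(a^{a'})^{b'}+(b^{a'})^{b'}\bigr)$. The unit axiom $(b,a)^{(0_B,0_A)}=(b,a)$ uses $b^{0_B}=b$ in $B$, $a^{0_A}=a$ in $A$, the hypothesis $a^{0_B}=a$ from condition (2), and the consequences $b^{0_A}=0_A$ and $0_A^{0_B}=0_A$ that follow from $(1_A)$, $(2_A)$, $(1_B)$. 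The associativity axiom $\bigl((b,a)^{(b',a')}\bigr)^{(b'',a'')} = (b,a)^{(b'+b'',\,a'+b'\cdot a'')}$ reduces in its first coordinate to $(b^{b'})^{b''}=b^{b'+b''}$ inside $B$, and in its second coordinate, after expansion and application of $(2_B)$, $(3_B)$, $(4_B)$ and $(1_B)$ in the appropriate positions, to an identity in $A$. The distributivity axiom $\bigl((b,a)+(b',a')\bigr)^{(b'',a'')} = (b,a)^{(b'',a'')}+(b',a')^{(b'',a'')}$ reduces similarly; the term $((b\cdot a')^{a''})^{b''}$ is handled by $(3_A)$ followed by $(1_A)$, the term $((b+b')^{a''})^{b''}$ by $(2_A)$ followed by $(1_A)$ and $(4_A)$, and the first coordinate is immediate from the axiom $(b+b')^{b''}=b^{b''}+(b')^{b''}$ in $B$.

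For $(3) \Rightarrow (1)$, assuming $B\ltimes A\in\Gwa$, define $i\colon A \to B\ltimes A$ by $i(a)=(0_B,a)$, $p\colon B\ltimes A \to B$ by $p(b,a)=b$, and a section $j\colon B\to B\ltimes A$ by $j(b)=(b,0_A)$. Direct computation from the semi-direct product formulas shows that $p$ and $j$ are morphisms in $\Gwa$, that $i$ realizes $A$ as $\ker p$, and that $pj=1_B$, so \eqref{ses} is a split extension. Substituting $j(b)=(b,0_A)$ into \eqref{32}--\eqref{34} and evaluating with the semi-direct product operations yields $j(b)+i(a)-j(b)=i(b\cdot a)$, $j(b)^{i(a)}-j(b)=i(b^a)$, and $i(a)^{j(b)}=i(a^b)$, which recover $\beta_+$, $\beta_{\asc}$, and $\beta_{\ast}$ respectively under the identification $i(a)\equiv a$.

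The main obstacle is the second-coordinate calculation in the associativity axiom for $(2)\Rightarrow(3)$: both sides carry triply nested applications of several distinct operations (two group actions, their duals, and the internal actions of $A$ and $B$), and isolating the precise role of each of $(1_B)$--$(4_B)$ requires careful bookkeeping. Once this is settled the remaining axioms are routine, and the implications $(1)\Rightarrow(2)$ and $(3)\Rightarrow(1)$ follow by formal manipulation.
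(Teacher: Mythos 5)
Your proposal takes essentially the same route as the paper: \textbf{(1)}$\Rightarrow$\textbf{(2)} via Proposition \ref{prop:actcond} together with $a^{0_B}=a$ obtained from \eqref{34}, \textbf{(2)}$\Rightarrow$\textbf{(3)} by deriving $b^{0_A}=0_A$, ${0_A}^{b}=0_A$ from $\bm{(1_A)}$, $\bm{(2_A)}$, $\bm{(1_B)}$ and then checking the unit, the identity $x^{y+z}=(x^{y})^{z}$ and the distributivity axiom for the bracket on $B\ltimes A$ with exactly the conditions you name, and \textbf{(3)}$\Rightarrow$\textbf{(1)} by the split extension $p(b,a)=b$, $i(a)=(0_B,a)$, $j(b)=(b,0_A)$ and verifying that the three induced derived actions coincide with $\beta$. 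The only difference is one of detail: the paper carries out the nested expansions you defer as ``bookkeeping'' (using, in addition to $\bm{(1_B)}$--$\bm{(4_B)}$, also $\bm{(1_A)}$ and the internal axioms of $A$ and $B$ to distribute exponents over sums), so your plan is correct and essentially identical to the published argument.
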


\begin{proof} \begin{enumerate}[leftmargin=2.2cm]
\item[\textbf{(1)}$\Rightarrow$\textbf{(2):}] by Proposition \ref{prop:actcond}.
\item[\textbf{(2)}$\Rightarrow$\textbf{(3):}] First of all we will show that from $\bm{(1_A)}$, $\bm{(2_{A})}$ and $\bm{(1_{B})}$ follow the conditions of \ref{prop:act-b} except the one ${a}^{0_{B}}=a$.
From $\bm{(1_A)}$ we have
\begin{equation*}
	{0_A}^{b}={0_A+0_A}^{b}={0_A}^{b}+{0_A}^{b}
\end{equation*}
and then ${0_A}^{b}=0_A$.

From $\bm{(2_A)}$ we have
\begin{equation*}
	{0_B}^{a}={0_B+0_B}^{a}={0_B}^{a}+{0_B}\cdot\left({0_B}^{b}\right)={0_B}^{a}+{0_B}^{a}
\end{equation*}
and then ${0_B}^{a}=0_A$. Note that we will not use this property in the proof of \textbf{(2)}$\Rightarrow$\textbf{(3)}.

From $\bm{(1_B)}$ we have
\begin{equation*}
	{b}^{0_{A}}={b}^{0_{A}+0_{A}}=\left({b}^{0_{A}}\right)^{0_{A}}+{b}^{0_{A}}
\end{equation*}
since $a^{0_{A}}=a$ for any $a\in A$, we obtain
\begin{equation*}
	{b}^{0_{A}}={b}^{0_{A}}+{b}^{0_{A}}
\end{equation*}
and then ${b}^{0_{A}}=0_A$.

Now we shall prove that the semi-direct product $B\ltimes A\in\Gwa$. Obviously, $B\ltimes A$ is a group as it is in the case of groups. We have to show the following equalities for any $(b,a), (b',a'), (b'',a'')\in B\ltimes A$:
\begin{enumerate}[label={\textbf{(\alph{*})}}, leftmargin=1cm]
	\item ${(b,a)}^{(b',a')+(b'',a'')}=\left({(b,a)}^{(b',a')}\right)^{(b'',a'')}$
	\item $\left({(b,a)}+{(b',a')}\right)^{(b'',a'')}={(b,a)}^{(b'',a'')}+{(b',a')}^{(b'',a'')}$
	\item ${(b,a)}^{(0_{B},0_{A})}={(b,a)}$.
\end{enumerate}

First we prove the equality in \textbf{(a)}.

\begin{enumerate}[leftmargin=1cm]
	\item[\textbf{(a)}] We have
	\begingroup\makeatletter\def\f@size{10}\check@mathfonts
	\def\maketag@@@#1{\hbox{\m@th\large\normalfont#1}}%
	\begin{alignat*}{2}
	{(b,a)}^{(b',a')+(b'',a'')} &= {(b,a)}^{(b'+b'',a'+b'\cdot a'')} \\
	&= \left( {b}^{b'+b''}, \left({a}^{a'+b'\cdot a''} \right)^{b'+b''}+\left( {b}^{a'+b'\cdot a''}\right)^{b'+b''}  \right)\\
	&= \left(\left({b}^{b'}\right)^{b''}, \left(\left(\left({a}^{a'}\right)^{b'\cdot a''}\right)^{b'}\right)^{b''}+\left(\left({b}^{a'}\right)^{b'\cdot a''}+{b}^{b'\cdot a''}\right)^{b'+b''}  \right)\\
	&= \left(\left({b}^{b'}\right)^{b''}, \left(\left(\left({a}^{a'}\right)^{b'}\right)^{a''}\right)^{b''}+\left(\left(\left({b}^{a'}\right)^{b'\cdot a''}\right)^{b'}\right)^{b''}+\left(\left({b}^{b'\cdot a''}\right)^{b'}\right)^{b''}\right)\\
	&= \left(\left({b}^{b'}\right)^{b''}, \left(\left(\left({a}^{a'}\right)^{b'}\right)^{a''}\right)^{b''}+\left(\left(\left({b}^{a'}\right)^{b'}\right)^{a''}\right)^{b''}+\left(\left({b}^{b'}\right)^{a''}\right)^{b''}\right).
	\end{alignat*} \endgroup
	On the other hand
	\begingroup\makeatletter\def\f@size{10}\check@mathfonts
	\def\maketag@@@#1{\hbox{\m@th\large\normalfont#1}}%
	\begin{alignat*}{2}
	\left({(b,a)}^{(b',a')}\right)^{(b'',a'')} &= \left({b}^{b'}, \left({a}^{a'}\right)^{b'}+\left({b}^{a'}\right)^{b'}\right)^{(b'',a'')}\\
	&= \left(\left({b}^{b'}\right)^{b''}, \left(\left(\left({a}^{a'}\right)^{b'}+\left({b}^{a'}\right)^{b'}\right)^{a''}\right)^{b''}+\left(\left({b}^{b'}\right)^{a''}\right)^{b''}\right)\\
	&= \left(\left({b}^{b'}\right)^{b''}, \left(\left(\left({a}^{a'}\right)^{b'}\right)^{a''}\right)^{b''}+\left(\left(\left({b}^{a'}\right)^{b'}\right)^{a''}\right)^{b''}+\left(\left({b}^{b'}\right)^{a''}\right)^{b''}\right)
	\end{alignat*} \endgroup
	From which we conclude that condition \textbf{(a)} holds in $B\ltimes A$.
\end{enumerate}
	Now we check condition \textbf{(b)}.
\begin{enumerate}[leftmargin=1cm]
	\item[\textbf{(b)}] We have
	\begingroup\makeatletter\def\f@size{10}\check@mathfonts
	\def\maketag@@@#1{\hbox{\m@th\large\normalfont#1}}%
	\begin{alignat*}{2}
	\left({(b,a)}+{(b',a')}\right)^{(b'',a'')} &= \left(b+b',a+b\cdot a'\right)^{(b'',a'')}\\
	&= \left(\left(b+b'\right)^{b''},\left(\left(a+b\cdot a'\right)^{a''}\right)^{b''}+\left(\left(b+b'\right)^{a''}\right)^{b''}\right) \\
	&= \left(b^{b''}+{b'}^{b''}, \left({a}^{a''}\right)^{b''}+\left(\left(b\cdot a'\right)^{a''}\right)^{b''}+\left({b}^{a''}\right)^{b''}+\left(b\cdot\left({b'}\right)^{a''}\right)^{b''}\right) \\
	&= \left(b^{b''}+{b'}^{b''}, \left({a}^{a''}\right)^{b''}+\left({b}^{a''}\right)^{b''}+\left(b\cdot\left({a'}^{a''}\right)\right)^{b''}+\left(b\cdot\left(b'\right)^{a''}\right)^{b''}\right)
	\end{alignat*}\endgroup
	Here we apply condition $\bm{(3_{A})}$. We have the following equalities.
	\begingroup\makeatletter\def\f@size{9.5}\check@mathfonts
	\def\maketag@@@#1{\hbox{\m@th\large\normalfont#1}}%
	\begin{alignat*}{2}
	{(b,a)}^{(b'',a'')}+{(b',a')}^{(b'',a'')} &= \left({b}^{b''},\left({a}^{a''}\right)^{b''}+\left({b}^{a''}\right)^{b''}\right)+\left({b'}^{b''},\left({a'}^{a''}\right)^{b''}+\left({b'}^{a''}\right)^{b''}\right)\\
	&= \left({b}^{b''}+{b'}^{b''}, \left({a}^{a''}\right)^{b''}+\left({b}^{a''}\right)^{b''} + {b}^{b''}\cdot \left({a'}^{a''}\right)^{b''} + {b}^{b''}\cdot \left({b'}^{a''}\right)^{b''}\right)
	\end{alignat*}\endgroup
	Applying condition $\bm{(4_{A})}$ we obtain
	\begin{equation*}
	b\cdot\left({a'}^{a''}\right)^{b''}={b}^{b''}\cdot \left({a'}^{a''}\right)^{b''}
	\end{equation*}
	and
	\begin{equation*}
	\left(b\cdot\left({b'}\right)^{a''}\right)^{b''}={b}^{b''}\cdot \left({b'}^{a''}\right)^{b''}
	\end{equation*}
	which proves that we have condition \textbf{(b)}.
\end{enumerate}
Finally, we check condition \textbf{(c)}.
\begin{enumerate}[leftmargin=1cm]
	\item[\textbf{(c)}] Here, if we apply the equalities
	\begin{equation*}
	\left({a}^{0_{A}}\right)^{0_{B}}={a}^{0_{B}}=a
	\end{equation*}
	and
	\begin{equation*}
	\left({b}^{0_{A}}\right)^{0_{B}}={0_{A}}^{0_{B}}=0_A,
\end{equation*}
	then we get
	\begin{equation*}
	 {(b,a)}^{(0_{B},0_{A})}=\left({b}^{0_{B}},\left({a}^{0_{A}}\right)^{0_{B}}+\left({b}^{0_{A}}\right)^{0_{B}}\right)={(b,a)}.
	\end{equation*}
\end{enumerate}
\item[\textbf{(3)}$\Rightarrow$\textbf{(1):}] Suppose $B\ltimes A\in\Gwa$, then we have a split extension
\begin{equation}\label{sext}
\xymatrix{
0 \ar[r] &   A \ar@{->}[r]^-{i} &  B\ltimes A \ar@{->}[r]_-{p} &   B \ar@/_/[l]_-{j} \ar[r] & 0 }
\end{equation}
where $p(b,a)=b$, $i(a)=(0,a)$ and $j(b)=(b,0)$. Define derived actions from this extension in a usual way.
\begin{alignat*}{2}
	b~\overline{\cdot}~a &= j(b)+a-j(b) \\
	&= (b,0)+(0,a)-(b,0) \\
	&= (b,b\cdot a)+(-b,0) \\
	&= (0,b\cdot a),
\end{alignat*}
therefore the derived action corresponding to the addition operation coincides with the given action.

Action corresponding to the action operation, denoted by $\ast$, is defined by
\begin{alignat*}{2}
	a*b &= \left(0_B,a\right)^{\left(b,0_A\right)} \\
	&= \left({0_{B}}^{b},\left({a}^{0_{A}}\right)^{b}+\left({0_{B}}^{0_{A}}\right)^{b}\right) \\
	&= \left(0,{a}^{b}\right).
\end{alignat*}
As we see this action also coincides with the given action.

For the dual to $\ast$ operation, i.e. dual action we have
\begin{alignat*}{2}
a\asc b &= \left(b,0_A\right)^{\left(0_B,a\right)}-\left(b,0_A\right) \\
&= \left(b^{0_{B}},\left({0_{A}}^{a}\right)^{0_{B}}+\left({b}^{a}\right)^{0_{B}}\right)-\left(b,0_A\right) \\
&= \left(b,b^{a}\right)-\left(b,0_A\right) \\
&= \left(b-b,b^{a}+b\cdot 0_A\right) \\
&= \left(0_B,b^{a}\right).
\end{alignat*}
Therefore this action also coincides with the given action of $B$ on $A$, which proves that the given action of $B$ on $A$ is a derived action, which concludes the proof of the theorem.
\end{enumerate}	
\end{proof}

For the examples of derived actions in the category $\Gwa$ see Section \ref{sect:subcat}, Lemma \ref{lem:selfact} and Corollary \ref{cor:idealact}.

\section{The subcategory $\rGwa\hookrightarrow\Gwa$}\label{sect:subcat}
Consider the objects $A\in\Gwa$ which satisfy two conditions:
\begin{enumerate}[label={\textbf{(\arabic{*})}}, leftmargin=1.7cm]
	\item ${x}^{y}+z=z+{x}^{y}$, $y\neq 0$ and
	\item ${x}^{\left({y}^{z}\right)}={x}^{y}$,
\end{enumerate}
for any $x,y,z\in A.$ This kind of objects will be called \textit{reduced groups with action}, and the corresponding full subcategory of $\Gwa$ will be denoted by $\rGwa.$

Derived actions are defined in $\rGwa$ in analogous way as it is in $\Gwa$.

\begin{example}
For any set $X$ let $F(X)$ be a free group with action on itself in $\Gwa$; one can see the construction in \cite{Datuashvili2004}. Let $R$ be a congruence relation on $F(X)$ generated by the relations
\begin{equation*}
{x}^{y}+z \sim z+{x}^{y}
\end{equation*}
for any $y\neq 0$ and
\begin{equation*}
{x}^{\left({y}^{z}\right)}\sim {x}^{y}
\end{equation*}
for any $x,y,z\in F(X)$. Then the quotient object 
$\faktor{F(X)}{R}$ by the $R$ obviously is an object of $\rGwa$.
\end{example}

\begin{theorem}\label{theo:equivcond}
Let $A,B\in\rGwa$ and $\beta=(\beta_{+}, \beta_{\ast}, \beta_{\asc})\colon B\times A\rightarrow A$ be a triple of actions of $B$ on $A$ in $\rGwa$. Then the following conditions are equivalent:
\begin{enumerate}[label={\textbf{(\arabic{*})}}, leftmargin=1cm]
\item $\beta$ is a triple of derived actions in $\rGwa$.
\item $\beta$ satisfies condition \textbf{(2)} of Theorem \ref{theo:der-act-equiv} and the following conditions
\begin{equation}\label{tennew}
\begin{array}{rclcrcl}
	b\cdot{a}^{a'}              & = & {a}^{a'} & \qquad\qquad\qquad\qquad & {a}^{b}+a'                  & = & a'+ {a}^{b} \\
	b\cdot{a}^{b'}              & = & {a}^{b'} & \qquad\qquad\qquad\qquad & {a}^{\left({a'}^{b}\right)} & = & {a}^{a'}    \\
	{b}^{b'}\cdot a             & = & a        & \qquad\qquad\qquad\qquad & {a}^{\left({b}^{a'}\right)} & = & a           \\
	{b}^{\left({a}^{a'}\right)} & = & {b}^{a}  & \qquad\qquad\qquad\qquad & {b}^{\left({b'}^{a}\right)} & = & 0      \\
	{a}^{\left({b}^{b'}\right)} & = & {a}^{b}  & \qquad\qquad\qquad\qquad & {b}^{\left({a}^{b'}\right)} & = & {b}^{a}
\end{array}	
\end{equation}
for any $a,a'\in A$, $b,b'\in B$. Note that under the conditions \eqref{tennew}, $\bm{(2_{A})}$, $\bm{(3_{A})}$ and $\bm{(4_{A})}$ have simpler forms.
\item The semi-direct product $B\ltimes A$ is an object in $\rGwa$.
\end{enumerate}
\end{theorem}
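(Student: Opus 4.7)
The plan is to mirror the three-step proof of Theorem \ref{theo:der-act-equiv} by setting up the cycle \textbf{(1)}$\Rightarrow$\textbf{(2)}$\Rightarrow$\textbf{(3)}$\Rightarrow$\textbf{(1)}, so that at each implication the only genuinely new work beyond the $\Gwa$ case is bookkeeping for the two reducedness axioms defining $\rGwa$.

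For \textbf{(1)}$\Rightarrow$\textbf{(2)}, I would start from a split extension $0\to A\to E\to B\to 0$ in $\rGwa$ that induces $\beta$. Proposition \ref{prop:actcond} immediately delivers condition \textbf{(2)} of Theorem \ref{theo:der-act-equiv}. The ten identities in \eqref{tennew} are then obtained by translating the two reducedness axioms of $E$ through the formulas \eqref{32}--\eqref{34}. For instance, $b\cdot a^{a'}=j(b)+a^{a'}-j(b)=a^{a'}$ comes from axiom (1) of $\rGwa$ in $E$ (with $a^{a'}$ commuting past $j(b)$); identities such as $a^{(a'^{b})}=a^{a'}$ and $a^{(b^{b'})}=a^{b}$ are immediate from axiom (2) of $\rGwa$ inside $E$; and the one identity that needs an extra step, $b^{(b'^{a})}=0$, is handled by expanding $b'^{a}=j(b')^{a}-j(b')$, splitting $j(b)^{j(b')^{a}-j(b')}$ via the exponent-sum rule $x^{y+z}=(x^{y})^{z}$, and collapsing each factor by axiom (2).

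For \textbf{(2)}$\Rightarrow$\textbf{(3)}, Theorem \ref{theo:der-act-equiv} already guarantees $B\ltimes A\in\Gwa$, so it remains only to verify the two $\rGwa$-axioms on $B\ltimes A$. Expanding through the semi-direct product formulas produces an equality of pairs whose $B$-coordinate follows from $B\in\rGwa$ alone, while the $A$-coordinate is precisely where the ten identities of \eqref{tennew} are consumed, together with reducedness of $A$. I expect this coordinate-by-coordinate check to be the main obstacle---not conceptually but computationally, because each expression of the form $\bigl(a_1^{a_2}\bigr)^{b_2}$ or $\bigl(b_1^{a_1}\bigr)^{b_2}$ has to be moved past a sum or folded through a nested exponent by exactly one of the new identities, and selecting the correct identity at each step is what the closed system \eqref{tennew} is designed to facilitate.

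For \textbf{(3)}$\Rightarrow$\textbf{(1)}, once $B\ltimes A\in\rGwa$ the canonical split extension $0\to A\to B\ltimes A\to B\to 0$ sits in $\rGwa$, and the derived actions it induces were shown, in the \textbf{(3)}$\Rightarrow$\textbf{(1)} part of Theorem \ref{theo:der-act-equiv}, to coincide with $\beta$ on each of $\beta_{+}$, $\beta_{\ast}$ and $\beta_{\asc}$. No further computation is needed here, so the entire substance of the theorem rests on carrying out the reducedness verification for $B\ltimes A$ in the direction \textbf{(2)}$\Rightarrow$\textbf{(3)}.
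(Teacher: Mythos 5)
Your proposal is correct and follows essentially the same route as the paper: the same cycle \textbf{(1)}$\Rightarrow$\textbf{(2)}$\Rightarrow$\textbf{(3)}$\Rightarrow$\textbf{(1)}, deriving \eqref{tennew} from the two reducedness axioms of the middle object of the split extension via \eqref{32}--\eqref{34} (including the extra expansion needed for ${b}^{\left({b'}^{a}\right)}=0$), reducing \textbf{(2)}$\Rightarrow$\textbf{(3)} via Theorem \ref{theo:der-act-equiv} to checking only the two $\rGwa$-axioms on $B\ltimes A$, and reusing the \textbf{(3)}$\Rightarrow$\textbf{(1)} argument of that theorem. The only difference is that the paper writes out the coordinate computations for the two $\rGwa$-identities in $B\ltimes A$ explicitly, whereas you describe (accurately) which identities of \eqref{tennew} and which reducedness axioms of $A$ and $B$ are consumed without carrying the calculation through.
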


\begin{proof} \begin{enumerate}[leftmargin=2.2cm]
\item[\textbf{(1)}$\Rightarrow$\textbf{(2):}] We will check only  the conditions ${a}^{\left({b}^{a'}\right)}=a$, ${b}^{\left({b'}^{a}\right)}=0$ and ${b}^{\left({a}^{b'}\right)}={b}^{a}$. Other conditions are obvious.
	\begin{enumerate}[label=\textbf{(\roman{*})},leftmargin=1cm]
	\item ${a}^{\left({b}^{a'}\right)}={a}^{\left({j(b)}^{a'}\right)-j(b)}={a}^{{j(b)}-j(b)}={a}^{0}=a$;
	\item ${b}^{\left({b'}^{a}\right)}={j(b)}^{\left({j(b')}^{a}-j(b')\right)}-j(b)=\left({j(b)}^{j(b')}\right)^{-j(b')}-j(b)={j(b)}^{0}-j(b)=0$;
	\item ${b}^{\left({a}^{b'}\right)}={j(b)}^{\left({a}^{j(b')}\right)}-j(b)={j(b)}^{a}-j(b)={b}^{a}$.
	\end{enumerate}
\item[\textbf{(2)}$\Rightarrow$\textbf{(3):}] By Theorem \ref{theo:der-act-equiv} we need to prove only that
\begin{equation*}
	{(b,a)}^{(b',a')}+(b'',a'')=(b'',a'')+{(b,a)}^{(b',a')}
\end{equation*}
and
\begin{equation*}
	{(b,a)}^{\left({(b',a')}^{(b'',a'')}\right)}={(b,a)}^{(b',a')}
\end{equation*}
for any $(b,a),(b',a'),(b'',a'')\in B\ltimes A$. We have
\begin{alignat*}{2}
{(b,a)}^{(b',a')}+(b'',a'') &= \left({b}^{b'},{\left({a}^{a'}\right)}^{b'}+{\left({b}^{a'}\right)}^{b'}\right)+(b'',a'') \\
&= \left( {b}^{b'}+b'',{\left({a}^{a'}\right)}^{b'}+{\left({b}^{a'}\right)}^{b'}+{b}^{b'}\cdot a''\right) \\
&= \left( {b}^{b'}+b'',{\left({a}^{a'}\right)}^{b'}+{\left({b}^{a'}\right)}^{b'}+ a''\right).
\end{alignat*}
On the other hand
\begin{alignat*}{2}
(b'',a'')+{(b,a)}^{(b',a')} &= (b'',a'')+\left({b}^{b'},{\left({a}^{a'}\right)}^{b'}+{\left({b}^{a'}\right)}^{b'}\right) \\
	&= \left(b''+{b}^{b'},a''+b''\cdot{\left({a}^{a'}\right)}^{b'}+b''\cdot{\left({b}^{a'}\right)}^{b'}\right) \\
	&= \left({b}^{b'}+b'',a''+{\left({a}^{a'}\right)}^{b'}+{\left({b}^{a'}\right)}^{b'}\right) \\
	&= \left({b}^{b'}+b'',{\left({a}^{a'}\right)}^{b'}+{\left({b}^{a'}\right)}^{b'}+a''\right).
\end{alignat*}
which proves the first identity. For the second identity we have
\begingroup\makeatletter\def\f@size{11}\check@mathfonts
\def\maketag@@@#1{\hbox{\m@th\large\normalfont#1}}%
\begin{alignat*}{2}
	{(b,a)}^{\left({(b',a')}^{(b'',a'')}\right)} & = {(b,a)}^{\left({b'}^{b''},\left({a'}^{a''}\right)^{b''}+\left({b'}^{a''}\right)^{b''}\right)} \\
	&= \left({b}^{\left({b'}^{b''}\right)}, {\left({a}^{\left({\left({a'}^{a''}\right)}^{b''}+{\left({b'}^{a''}\right)}^{b''}\right)}\right)}^{\left({b'}^{b''}\right)}+{\left({b}^{\left({\left({a'}^{a''}\right)}^{b''}+{\left({b'}^{a''}\right)}^{b''}\right)}\right)}^{\left({b'}^{b''}\right)}\right) \\
	&= \left({b}^{b'}, {\left({\left({a}^{\left({\left({a'}^{a''}\right)}^{b''}\right)}\right)^{\left({\left({b'}^{a''}\right)}^{b''}\right)}}\right)}^{b'}+{\left({b}^{\left({\left({a'}^{a''}\right)}^{b''}+{\left({b'}^{a''}\right)}^{b''}\right)}\right)}^{b'}\right) \\
	&= \left({b}^{b'}, \left({a}^{a'}\right)^{b'}+{\left({\left({b}^{\left({a'}^{a''}\right)}\right)}^{\left({\left({b'}^{a''}\right)}^{b''}\right)}+{b}^{\left({\left({b'}^{a''}\right)}^{b''}\right)}\right)}^{b'}\right) \\
	&= \left({b}^{b'}, \left({a}^{a'}\right)^{b'}+{\left({\left({b}^{a'}\right)}^{\left({b'}^{a''}\right)}\right)}^{b'}+{\left({b}^{\left({b'}^{a''}\right)}\right)}^{b'}\right) \\
	&= \left({b}^{b'}, \left({a}^{a'}\right)^{b'}+{\left({b}^{a'}\right)}^{b'}\right) \\
	&= {(b,a)}^{(b',a')}
\end{alignat*}\endgroup
which proves the second identity. Here we applied that ${\left({b}^{\left({b'}^{a''}\right)}\right)}^{b'}=0$, which follows from \eqref{tennew}, where we have $b^{\left({b'}^{a}\right)}=0$, for any $a\in A,$ in particular for $a=a''$ in our case, and the fact that $0^{b'}=0$ (\ref{prop:actcond} (b)).
\item[\textbf{(3)}$\Rightarrow$\textbf{(1):}] The proof is the same as of the one in Theorem \ref{theo:der-act-equiv} and therefore we omit.
\end{enumerate}	
\end{proof}

\begin{lemma}\label{lem:selfact}
Let $A\in \Gwa$ (resp. $A\in \rGwa$). An action of $A$ on itself defined by $a\cdot a'=a+a'-a,  a'\ast a= {a'}^{\triangleright a}={a'}^{a}$ and $a'\asc a=a^{\circ a'}={a}^{a'}-a$, for $a,a'\in A$, is a derived action in $\Gwa$ (resp. $\rGwa$).
\end{lemma}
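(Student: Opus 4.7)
The plan is to invoke Theorem \ref{theo:der-act-equiv} for the $\Gwa$ case and Theorem \ref{theo:equivcond} for the $\rGwa$ case, applying the equivalence \textbf{(1)}$\Leftrightarrow$\textbf{(2)} in each. So it suffices to check that the proposed triple
\[
\beta_{+}(b,a)=b+a-b,\qquad \beta_{\ast}(b,a)=a^{b},\qquad \beta_{\asc}(b,a)=b^{a}-b,
\]
where the superscripts on the right denote the intrinsic action $\varepsilon$ on $A$, satisfies the identities in condition \textbf{(2)} of the relevant theorem.

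For the $\Gwa$ case, the classical group-action axioms for $\beta_{+}$ are the usual conjugation laws in $A$, and $a^{0_{B}}=a^{0}=a$ is the axiom $\varepsilon(a,0)=a$. Each of the eight identities $\bm{(1_A)}$--$\bm{(4_A)}$ and $\bm{(1_B)}$--$\bm{(4_B)}$, after substituting the three definitions, collapses into a short computation that uses only the axioms $(x+y)^{z}=x^{z}+y^{z}$, $x^{y+z}=(x^{y})^{z}$ and $x^{0}=x$ on $A$, together with their consequence $(-x)^{z}=-x^{z}$. As a sample, in $\bm{(4_A)}$ the left side $(b+a-b)^{b'}$ distributes to $b^{b'}+a^{b'}-b^{b'}$, which is exactly the conjugation $b^{b'}\cdot a^{b'}$ on the right.

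For the $\rGwa$ case, one additionally checks the ten identities in \eqref{tennew}. The commutation-type identities (those of the form $u\cdot v=v$ or $u+v=v+u$) follow from the first reduced axiom, since the relevant power has a nonzero exponent and is therefore central. The iterated-exponent identities follow from the second reduced axiom $x^{(y^{z})}=x^{y}$ combined with $\Gwa$-associativity; a typical sample is
\[
b^{({b'}^{a})} \;=\; b^{(b')^{a}-b'}-b \;=\; \bigl(b^{(b')^{a}}\bigr)^{-b'}-b \;=\; (b^{b'})^{-b'}-b \;=\; b^{0}-b \;=\; 0.
\]

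The one real difficulty is bookkeeping: at every superscript one must track whether it refers to the intrinsic $\varepsilon$-action on $A$, to the structural action within $B=A$, or to one of the three derived actions $\beta_{+},\beta_{\ast},\beta_{\asc}$; once that is untangled, each of the verifications is routine from the defining axioms of $\Gwa$ (resp.\ $\rGwa$).
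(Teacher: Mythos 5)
Your proposal is correct and takes essentially the same route as the paper: the paper's proof of Lemma \ref{lem:selfact} consists precisely of the remark that one checks condition \textbf{(2)} of Theorem \ref{theo:der-act-equiv} (resp.\ Theorem \ref{theo:equivcond}) for the triple $b\cdot a=b+a-b$, $a^{b}$, $b^{a}-b$, which is exactly what you do. Your sample verifications (e.g.\ $\bm{(4_A)}$ and $b^{\left({b'}^{a}\right)}=0$) are accurate, and your handling of the commutation-type identities in \eqref{tennew} uses the same implicit nonzero-exponent convention as the paper itself.
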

\begin{proof} Easy but careful checking of the conditions given in Theorem \ref{theo:der-act-equiv} (resp. Theorem \ref{theo:equivcond}).
\end{proof}

Note, that an action of $A$ on itself defined by $a\cdot a'=a+a'-a, a'^{\triangleright a}=a'^a$ and $a^{\circ a'}=a^{a'}$, for $a,a'\in A,$ is not a derived action in $\Gwa$ and therefore in $\rGwa.$ It is obvious that conditions $\bm{(2_A)}$ and $\bm{(1_B)}$ are not satisfied.

\begin{corollary}\label{cor:idealact}
Let $A\in \Gwa$ (resp. $A\in \rGwa$) and let $I\subset A$ be an ideal of $A.$ Then the action of $A$ on $I$ defined by $a\cdot i=a+i-a, i^{\triangleright a}=i^a$ and $a^{\circ i}=a^i-a$, $i\in I$, $a\in A$ is a derived action in $A\in\Gwa$ (resp. in $A\in\rGwa$).
\end{corollary}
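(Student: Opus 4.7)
The plan is to deduce this corollary directly from Lemma \ref{lem:selfact}. The three operations defining the action of $A$ on $I$ are, formula by formula, exactly the restrictions to $A \times I$ of the self-action of $A$ on itself in that lemma. So the strategy breaks into two parts: first verify that the restricted operations actually take values in $I$, and then transfer the derived-action property from the self-action on $A$ to the restricted action on $I$ by invoking the equational characterizations of Theorem \ref{theo:der-act-equiv} (respectively Theorem \ref{theo:equivcond}).

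For well-definedness I would cite the three defining conditions of an ideal in turn. Condition \ref{cond:defideal1} (normality) gives $a + i - a \in I$, so $a \cdot i \in I$; condition \ref{cond:defideal2} gives $i^a \in I$, so $i^{\btr a} \in I$; and condition \ref{cond:defideal3}, together with the remark immediately after the definition of an ideal that $-g + g^a \in I$ is equivalent to $g^a - g \in I$, gives $a^i - a \in I$, so $a^{\asc i} \in I$. This promotes the three prescribed formulas to an honest triple of maps $A \times I \to I$, which is a triple of actions of $A$ on $I$ in $\Gwa$ (resp.\ $\rGwa$).

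For derivedness in $\Gwa$ I would appeal to condition \textbf{(2)} of Theorem \ref{theo:der-act-equiv}. All of the required identities ($\bm{(1_A)}$--$\bm{(4_A)}$, $\bm{(1_B)}$--$\bm{(4_B)}$, the group action axioms for the dot, and $a^{0_B} = a$) are universal equations in the action variables. By Lemma \ref{lem:selfact} they already hold for the self-action of $A$ for every choice of variables in $A$, and therefore they continue to hold once we restrict the acted-on variables to lie in $I \subset A$. For the reduced case the identical argument applies, using Theorem \ref{theo:equivcond} together with the $\rGwa$ part of Lemma \ref{lem:selfact}: the extra identities \eqref{tennew} transport under restriction of variables in exactly the same way.

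I do not anticipate any genuine obstacle. The only step with computational content is the well-definedness check, which reduces verbatim to the three ideal axioms, and the derived-action property is then a purely formal consequence of being the restriction of an action that is already known to be derived.
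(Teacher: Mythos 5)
Your proof is correct and is exactly the argument the paper intends: the corollary is stated there without its own proof, as an immediate consequence of Lemma \ref{lem:selfact} obtained by restricting the self-action to the ideal, with the ideal axioms \ref{cond:defideal1}--\ref{cond:defideal3} giving closure of the three operations on $I$ and the universal identities of Theorem \ref{theo:der-act-equiv} (resp.\ Theorem \ref{theo:equivcond}, including \eqref{tennew}) transferring under restriction of the acted-on variables. The only point left tacit (in your write-up as in the paper) is that the ideal conditions also make $I$ itself an object of $\Gwa$ (resp.\ $\rGwa$), which is immediate since $I$ is a subgroup closed under the action operation and the reduced identities are inherited from $A$.
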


Lemma \ref{lem:selfact} and Corollary \ref{cor:idealact} give examples of derived actions in the categories $\Gwa$ and $\rGwa$.


\end{document}